\newcommand{\pa}{\ \, /\kern -0.75em / \ \,} 
\newcommand{\Rmnum}[1]{\expandafter\@slowromancap\romannumeral #1@}
\newtheorem{theorem}{Theorem}
\newtheorem{corollary}{Corollary}[theorem]
\begin{document}

\title{A Metric Framework for Triangle Inequalities via Barycentric Coordinates}
\markright{Submission}
\author{Xi Feng \\
	Beijing National Day School\\
        Beijing 100039, P. R. China\\
	\texttt{2101110604@stu.pku.edu.cn} \\
}
\maketitle

\begin{abstract}
This paper presents a unified metric-based framework for triangle geometric inequalities using barycentric coordinates. By interpreting classical inequalities as squared distances between points—a process termed metricization—we derive and refine numerous well-known inequalities. Furthermore, the squared-distance function ${\rm ELD}_I$, measures distances from the incenter to points on the Euler line, also yielding generalized inequalities. The method offers geometric clarity and extends naturally to higher-dimensional and non-Euclidean settings.
\end{abstract}
\keywords{Triangle inequalities \and Barycentric Coordinates \and Squared distance function \and Metricization}
\section{Introduction}
\noindent The theory of geometric inequalities pertaining to triangles constitutes a profoundly fertile area of mathematical inquiry. The seminal work by Mitrinović \textit{et al}. \cite{Mitrinovic1989} offers a systematic taxonomy of diverse classes of geometric inequalities and consolidates the most significant advances known at the time. In recent years, scholarly attention has increasingly focused on various dimensions of this field, including the refinement and strengthening of classical inequalities \cite{Lukarevski2019}, the exploration of underlying structural interrelations among established results (\cite{Lukarevski20232, Lukarevski20233}), and the employment of computer-assisted techniques to discover novel inequalities \cite{Rabinowitz2022}. Despite these developments, the existing body of research remains largely fragmented, lacking a cohesive methodological framework and a unified theoretical lens through which these inequalities can be comprehensively understood and systematically derived.

This paper builds upon the formulation in \cite{Feng2025}, which introduces a barycentric coordinate-based method for interpreting classical geometric inequalities as squared distances between points. Through this metric representation—termed metricization—many well-known inequalities, including those of Wolstenholme, Garfunkel–Bankoff, Kooi, Oppenheim, Neuberg–Pedoe, Klamkin, and Weitzenböck, are unified and strengthened. By selecting appropriate points in the barycentric coordinates, we transform each inequality into a non-negative squared length, thereby revealing a common geometric structure behind seemingly disparate results. This approach not only provides new proofs but also offers deeper insights into the nature and symmetry of classical triangle inequalities.
\section{An introduction to the barycentric coordinates}
In this section, we summarize the fundamental framework of barycentric coordinates as developed in \cite{Feng2025}. Let $\triangle ABC$ be a fixed nondegenerate reference triangle. The side lengths of the $\triangle ABC$ are denoted by $a = |BC|,\,\,b = |CA|$ and $c = |AB|$, The Conway notation is introduced as follows: 
\begin{align*}
    &S_A=\frac 12\left(b^2+c^2-a^2\right),
   &&S_B=\frac 12\left(c^2+a^2-b^2\right),
   &&S_C=\frac 12\left(a^2+b^2-c^2\right).
\end{align*}
For any point $P$, its barycentric coordinate is represented as a row vector:
\begin{align*}
        P = \frac{1}{S_{\triangle ABC}}[
S_{\triangle PBC},\,S_{\triangle APC},\,S_{\triangle ABP}],
\end{align*}
where $S_{\triangle XYZ}$ represents the oriented area of the triangle $XYZ$, with the convention that the area is positive if the $X,\, Y, \,Z$ are arranged counterclockwise. Moreover, the oriented area can be expressed via the determinant involving the barycentric coordinates of the points:
\begin{align*}
    S_{\triangle XYZ}=2S[X;Y;Z],
\end{align*}
where $S=S_{\triangle ABC}$ denotes the oriented area of the reference triangle $ABC$. 
For any four points $P,\,Q,\,M,\,N$, the scalar product of vectors $\vv{PQ}$ and $\vv{MN}$ satisfies
\begin{align*}
      \vv{PQ}\cdot \vv{MN}&=\left(Q-P\right)K\left(N-M\right)^T,
\end{align*}
where the metric matrix $K$ is given by
\begin{align*}
    K=\begin{bmatrix}
S_A & 0  & 0 \\
0  & S_B & 0 \\
0  & 0  & S_C \\
\end{bmatrix}.
\end{align*}
In particular, the squared distance between two points $P$ and $Q$ can be computed as
\begin{align*}
    |PQ|^2=PKP^T-2PKQ^T+QKQ^T.
\end{align*}
For the orthocenter $H$ of the triangle $ABC$ and an arbitrary point $X$, the following H kernel property holds:
\begin{align*}
    HKX^T=HKX^T.
\end{align*}
As a direct consequence, we obtain the inequality:
\begin{align}
    |HX|^2=XKX^T-HKH^T\geq0.
\end{align}
Furthermore, for any two points $P,\,Q$ and the point at infinity $l$, the following identity holds:
\begin{align*}
    |PQ|^2=
    \frac{1}{|l|^2}\left[\left(P-Q\right)Kl^T\right]^2+
    \frac{4S^2}{|l|^2}\left[P;Q;l\right]^2.
\end{align*}
Obviously all terms on the righ-hand side are nonnegative, we immediately derive the following theorem:
\begin{theorem}
    \begin{align}
        |PQ|^2&\geq
    \frac{1}{|l|^2}\left[\left(P-Q\right)Kl^T\right]^2,\label{Cauchy}\\
    |PQ|^2&\geq
    \frac{4S^2}{|l|^2}\left[P;Q;l\right]^2.
    \label{Cauchy 2}
    \end{align}
\end{theorem}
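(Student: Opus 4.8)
The plan is to extract both inequalities directly from the Lagrange-type identity
\[
|PQ|^2=\frac{1}{|l|^2}\left[(P-Q)Kl^T\right]^2+\frac{4S^2}{|l|^2}\left[P;Q;l\right]^2
\]
displayed immediately above the statement. Working over $\mathbb{R}$ there are no isotropic directions, so $|l|^2=lKl^T>0$ for the direction vector $l$ of the point at infinity; hence the prefactors $1/|l|^2$ and $4S^2/|l|^2$ are strictly positive, and $S^2>0$ since the reference triangle is nondegenerate. Each summand on the right-hand side is thus a positive multiple of a perfect square, hence nonnegative.

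Dropping the second summand then gives $|PQ|^2\geq \frac{1}{|l|^2}[(P-Q)Kl^T]^2$, which is \eqref{Cauchy}, and dropping the first gives $|PQ|^2\geq \frac{4S^2}{|l|^2}[P;Q;l]^2$, which is \eqref{Cauchy 2}. Geometrically this is just the Pythagorean theorem: $(P-Q)Kl^T/|l|$ is, up to sign, the length of the orthogonal projection of $\vv{PQ}$ onto the line spanned by $l$, while $2S[P;Q;l]/|l|$ is the length of the component of $\vv{PQ}$ perpendicular to $l$, so the identity splits $|PQ|^2$ into those two squared pieces. Accordingly, equality holds in \eqref{Cauchy} exactly when $[P;Q;l]=0$, i.e.\ when $PQ$ is parallel to the direction $l$, and in \eqref{Cauchy 2} exactly when $\vv{PQ}$ is orthogonal to $l$.

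Because the identity is taken as given, there is no real obstacle here; the two statements are simply the Cauchy--Schwarz bounds associated with that Lagrange identity. The only genuine content — needed only if one wished to reprove the identity rather than cite it — is checking that the two displayed quantities really are the parallel and perpendicular components of $\vv{PQ}$ relative to $l$, which is a short computation from the bilinearity of the form $(\,\cdot\,)K(\,\cdot\,)^T$ and the determinant formula $S_{\triangle XYZ}=2S[X;Y;Z]$.
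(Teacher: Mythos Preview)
Your proposal is correct and follows exactly the paper's approach: the paper simply states that ``obviously all terms on the right-hand side are nonnegative'' in the displayed identity and immediately deduces the theorem, which is precisely what you do (with a bit more justification and the added observation about equality cases). Your remark that $|l|^2>0$ is justified because the bilinear form $vKw^T$ restricted to vectors with vanishing coordinate sum computes squared Euclidean lengths and is therefore positive definite on that subspace.
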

Notably, the inequality (\ref{Cauchy}) can be regared as a Cauchy-Schwarz-type inequality in barycentric coordinates. This framework allows one to transform a given inequality into the form $F\geq0$, then interpret $F$ as a squared distance between two points, thereby establishing the nonnegativity naturally.
Moreover, by selecting different points at infinity $l$, the inequalities (\ref{Cauchy}) and (\ref{Cauchy 2}) provide a mechanism to derive various strengthened forms of the original inequality. We refer to this approach as the metricization of inequalities.
\section{Metricization of some classic inequalities}
Let 
\begin{align*}
    &P = \left(x+y+z\right)^{-1}\left[y,\,z,\,x\right],
    &&Q = 
    \left(x+y+z\right)^{-1}\left[z,\,x,\,y\right],
\end{align*}
then the following identity holds \cite{Oppenheim1964}:
\begin{align}
    a^2(x-y)(x-z)+b^2(y-x)(y-z)+c^2(z-x)(z-y)=(x+y+z)^2|PQ|^2\geq0.
\end{align}
This expression can be interpreted as the squared distance between two moving points $P$ and $Q$. By choosing
\begin{align*}
    &P=\left(\frac xa+\frac yb+\frac zc\right)^{-1}
    \left[\frac yb,\, \frac zc,\, \frac xa\right],
    &&Q=\left(\frac xa+\frac yb+\frac zc\right)^{-1}
    \left[\frac zc,\, \frac xa, \,\frac yb\right],
\end{align*}
we ontain:
\begin{align}
  {\rm Wolstenholme}=
  \left(\frac xa+\frac yb+\frac zc\right)^2|PQ|^2\geq0,
\end{align}
where
\begin{align*}
    {\rm Wolstenholme}=x^2+y^2+z^2-2yz\cos A-2zx\cos B-2xy\cos C,
\end{align*}
which is known as the Wolstenholme inequality \cite{Wolstenholme1867}. It serves as a basis for deriving a variety of geometric inequalities. For instance, if we choose $x=\tan \frac A2,\,y=\tan \frac B2,\,z=\tan \frac C2$, then
\begin{align*}
    &P=\mu^{-1}\left[
\cos^{-2}\frac B2,\,
\cos^{-2}\frac C2,\,
\cos^{-2}\frac A2 \right],&&
Q=\mu^{-1}\left[
\cos^{-2}\frac C2,\,\cos^{-2}\frac A2,\,
\cos^{-2}\frac B2\right],
\end{align*}
where
$\mu = \cos^{-2}\frac A2+
\cos^{-2}\frac B2+
\cos^{-2}\frac C2$. This leads to:
\begin{align}
    {\rm Garfunkel\text{-}Bankoff}=\frac{\mu^2}{16R^2}|PQ|^2\geq 0,
\end{align}
where
\begin{align*}
    {\rm Garfunkel\text{-}Bankoff}=\tan^2\frac A2+\tan^2\frac B2+\tan^2\frac C2-
    \left(2-8\sin\frac A2\sin\frac B2\sin\frac C2\right),
\end{align*}
which is known as the Garfunkel-Bankoff inequality (\cite{Garfunkel1983, Bankoff1984}). Now consider the triangle $\triangle DEF$ formed by the tangents to the circumcircle of triangle $\triangle ABC$ at points $A$, $B$, and $C$, as shown in Figure \ref{fig:1}. Since the point $D$ lies on the perpendicular bisector of segment $BC$, its coordinates
can be expressed as:
\begin{align*}
    D = \frac 12\left(B+C\right)+\lambda\left(H-A\right),
\end{align*}
where $\lambda$ is determined by the orthogonality condition $(O-B)K(D-B)^T=0$. Consequently, points $D,\,E,\,F$ can be expressed as:
\begin{align*}
D &= \frac 12\left(B+C\right)+\frac{a^2}{2\left(S_A-HKH^T\right)}\left(H-A\right),\\
E &= \frac 12\left(C+A\right)+\frac{b^2}{2\left(S_B-HKH^T\right)}\left(H-B\right),\\
F &= \frac 12\left(A+B\right)+\frac{c^2}{2\left(S_C-HKH^T\right)}\left(H-C\right).
\end{align*}
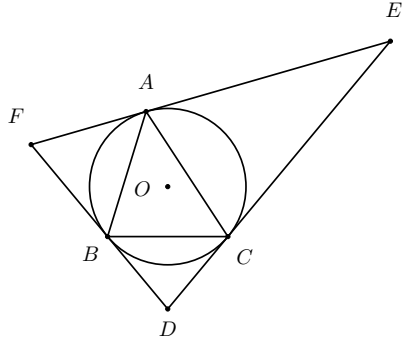
\begin{figure}
    \centering
    \scalebox{0.8}{
    \begin{tikzpicture}[line cap=round,line join=round,x=1.0cm,y=1.0cm]
\clip(-4.3,-2.16) rectangle (7.1,6.3);
\draw [line width=0.8pt] (-0.36,3.08)-- (-1.,1.);
\draw [line width=0.8pt] (-1.,1.)-- (1.,1.);
\draw [line width=0.8pt] (1.,1.)-- (-0.36,3.08);
\draw [line width=0.8pt] (0.,1.8307692307692307) circle (1.3000682731275692cm);
\draw [line width=0.8pt] (-2.270588235294118,2.5294117647058822)-- (3.7,4.25);
\draw [line width=0.8pt] (3.7,4.25)-- (0.,-0.2037037037037046);
\draw [line width=0.8pt] (0.,-0.2037037037037046)-- (-2.270588235294118,2.5294117647058822);
\draw [fill=black] (-1.,1.) circle (1.0pt);
\draw[color=black] (-1.28,0.71) node {$B$};
\draw [fill=black] (1.,1.) circle (1.0pt);
\draw[color=black] (1.28,0.67) node {$C$};
\draw [fill=black] (-0.36,3.08) circle (1.0pt);
\draw[color=black] (-0.36,3.57) node {$A$};
\draw [fill=black] (0.,1.8307692307692307) circle (1.0pt);
\draw[color=black] (-0.42,1.79) node {$O$};
\draw [fill=black] (0.,-0.2037037037037046) circle (1.0pt);
\draw[color=black] (0.,-0.53) node {$D$};
\draw [fill=black] (3.7,4.25) circle (1.0pt);
\draw[color=black] (3.76,4.77) node {$E$};
\draw [fill=black] (-2.270588235294118,2.5294117647058822) circle (1.0pt);
\draw[color=black] (-2.52,3.01) node {$F$};
\end{tikzpicture}
}
    \caption{The $\triangle DEF$ is formed by the tangents to the circumcircle of the $\triangle ABC$ at points $A$, $B$, and $C$.}
    \label{fig:1}
\end{figure}
Let
\begin{align*}
    &P = \mu^{-1}\left(\frac y{|FD|}D+\frac z{|DE|}E+\frac x{|EF|} F\right), 
    &&Q = \mu^{-1}\left(\frac z{|DE|}D+\frac x{|EF|} E+\frac y{|FD|}F\right),
\end{align*}
with $\mu = \frac x{|EF|}+\frac y{|FD|}+\frac z{|DE|}$. Then the following identity holds:
\begin{align}
    {\rm Kooi}=\mu^2R^2|PQ|^2\geq 0,
\end{align}
where
\begin{align*}
    {\rm Kooi}=R^2\left(x+y+z\right)^2-\left(
    yza^2+zxb^2+xyc^2\right),
\end{align*}
which is precisely the Kooi's inequality \cite{Kooi1958}. When the $\triangle ABC$ is obtuse, certain side lengths should be treated as negative. Specifically:
\begin{align*}
    &|EF| = \frac{a^3bc}{2S_BS_C},&&
    |FD| = \frac{ab^3c}{2S_AS_C},&&
    |DE| = \frac{abc^3}{2S_AS_B}.
\end{align*}
Upon substituting $(x,y,z)$ with $(a^2x,\,b^2y,\,c^2z)$, we obtain:
\begin{align*}
    &P = \mu^{-1}\left(\frac{yb^2}{|FD|}D+\frac{zc^2}{|DE|}E+\frac{xa^2}{|EF|} F\right), 
    &&Q = \mu^{-1}\left(\frac{zc^2}{|DE|}D+\frac{xa^2}{|EF|} E+\frac {yb^2}{|FD|}F\right),
\end{align*}
with $\mu=\frac{a^2x}{|EF|}+\frac{b^2y}{|FD|}+\frac{c^2z}{|DE|}$, and the identity bocomes:
\begin{align}
    {\rm Oppenheim}=\mu^2|PQ|^2\geq 0,
\end{align}
where
\begin{align*}
    {\rm Oppenheim}= \left(xa^2+yb^2+zc^2\right)^2-16S^2\left(yz+zx+xy\right),
\end{align*}
which is the Oppenheim inequality \cite{Oppenheim1961}. Furthermore, by choosing $x=S'_A,\,y=S'_B,\,z = S'_C$ -- the Conway notation for another triangle $\triangle A'B'C'$ -- we obtain the Neuberg–Pedoe's inequality \cite{Pedoe1943}.

To provide another perspective, we reinterpret these inequalities via point-distance forms. Let $P=(x+y+z)^{-1}[x,\,y,\,z]$, and let $Q$ be any fixed point in the plane, then we have
\begin{align*}
    x|QA|^2+y|QB|^2+z|QC|^2&=
    \left(x+y+z\right)\left(QKQ^T-2PKQ^T+3GKP^T\right),\\
    yza^2+zxb^2+xyc^2&=-\left(x+y+z\right)^2
    \left(PKP^T-3GKP^T\right),
\end{align*}
where $G=\frac13[1,1,1]$ represents the centroid of the triangle $ABC$. It follows that:
\begin{align}
    {\rm Klamkin}=\left(x+y+z\right)^2|PQ|^2\geq0,
\end{align}\label{Klamkin inequality}
where
\begin{align*}
    {\rm Klamkin}=\left(x+y+z\right)
    \left(x|QA|^2+y|QB|^2+z|QC|^2\right)-\left(
    yza^2+zxb^2+xyc^2\right).
\end{align*}
The inequality (\ref{Klamkin inequality}) is known as the polar moment of inertia inequality \cite{Klamkin1975}.   
Let $Q=O$, the circumcenter $O$ of the triangle $ABC$, this reduces to Kooi's inequality:
\begin{align}
    {\rm Kooi}=\left(x+y+z\right)^2|PO|^2\geq0.
\end{align}
In particular, when $P$ is the symmedian point $X_6$ (denoted by $X(6)$ in Encyclopedia of Triangle Centers (\cite{kimberling_etc}, hereafter referred to as ETC), we have:
\begin{align}
    {\rm Weizenbock}\cdot{\rm Weizenbock}^\star=\frac{a^2+b^2+c^2}{R^2}|OX_6|^2\geq0,
\end{align}
where
\begin{align*}
    &{\rm Weizenbock}=a^2+b^2+c^2-4\sqrt3S,&&&{\rm Weizenbock}^\star=a^2+b^2+c^2+4\sqrt3S.
\end{align*}
which is the Weizenbock inequality \cite{Weitzenboeck1919}.
Letting $P = \left(a^2x+b^2y+c^2z\right)^{-1}\left[a^2x,\,b^2y,\,c^2z\right]$, the identity is transferred to Oppenheim's inequality as follows:
\begin{align}
    {\rm Oppenheim}=\frac1{R^2}\left(xa^2+yb^2+zc^2\right)^2|PO|^2\geq0.
\end{align}
Let $D_1,\,E_1,\,F_1$ be the points where the incircle touches the sides of the triangle; specifically,
\begin{align*}
    &D_1 = \frac 1a\left[0,\,s-c,\,s-b\right],&&
    &&E_1 = \frac 1b\left[s-c,\,0,\,s-a\right],
    &&F_1 = \frac 1c\left[s-b,\,s-a,\,0\right].
\end{align*}
Letting $P = (x+y+z)^{-1}(xD_1+yE_1+zF_1)$, we obtain (\cite{Lukarevski2023}):
\begin{align}
    {\rm Wolstenholme}=\frac{\left(x+y+z\right)^2}{r^2}|PI|^2.
\end{align}
In particular, set
\begin{align*}
    P=\mu^{-1}\left(\tan\frac A2\cdot D_1+\tan \frac B2\cdot E_1+\tan \frac C2\cdot F_1\right),\quad\text{where}\quad \mu=\tan\frac A2+\tan \frac B2+\tan \frac C2,
\end{align*}
then
\begin{align}
    {\rm Garfunkel\text{-}Bankoff}=\frac{\mu^2}{r^2}|PI|^2\geq0.
\end{align}
\section{Distance from incenter to Euler line point}
This section is devoted to the study of concrete geometric inequalities via metric analysis. As is customary, we express the fundamental elements of a triangle in terms of the semiperimeter $s$, the inradius $r$, and the circumradius $R$. The function (${\rm ELD}_I$) is defined to represent the squared distance from the incenter $I$ to a point $X$ on the Euler line:
\begin{align*}
    {\rm ELD}_I\left(\lambda\right)&=|XI|^2
    =|OH|^2\lambda^2-2\left(OKI^T-HKH^T\right)\lambda+|IH|^2,
\end{align*}
where $X=\left(1-\lambda\right)H+\lambda O$, with the following coefficients:
\begin{align*}
    |OH|^2&=9R^2+8Rr+2r^2-2s^2,\\
    OKI^T-HKH^T&=12R^2+14Rr+5r^2-3s^2,\\
    |IH|^2&=4R^2+4Rr+3r^2-s^2.
\end{align*}
The minimum value of ${\rm ELD}_I\left(\lambda\right)$,
denoted $d_I$, represents the squared distance from the incenter $I$ to the Euler line: 
\begin{align*}
    d_I=\frac{4S^2\left[H;I;O\right]^2}{|OH|^2}=
    -\frac{s^4-2\left(2R^2+10Rr-r^2\right)s^2+r(4R+r)^3}{4\left(9R^2+8Rr+2r^2-2s^2\right)}.
\end{align*}
In particular \cite{Mitrinovic1989},
\begin{align}
    s^4-2\left(2R^2+10Rr-r^2\right)s^2+r\left(4R+r\right)^3
    =-16S^2\left[H;I;O\right]^2\leq 0.
    \label{Bottema 1}
\end{align}
Treating LHS as a quadratic in $s^2$, let the roots be $s^2_1$ and $s^2_2$. Then we have \cite{Blundon1965}:
\begin{align}
    s^2_1\leq s^2\leq s^2_2,\quad\text{where}\quad    s^2_{1,2}=2R^2+10Rr-r^2\mp 
    2\left(R-2r\right)\sqrt{R\left(R-2r\right)}.
    \label{Bottema 2}
\end{align}
Moreover, the squared distance from an any point $P$ ($P = \alpha I+\beta O+ \gamma H$) to the Euler line can be expressed as:
\begin{align*}
    d_P=\frac{4S^2\left[H;P;O\right]^2}{|OH|^2}=\alpha^2d_I.
\end{align*}
A fundamental result can now be stated as follows:
\begin{theorem}[ELD inequality]
    \begin{align}
        {\rm ELD}_I\left(\lambda\right)\geq d_I\geq0.
        \label{ELD inequality}
    \end{align}
\end{theorem}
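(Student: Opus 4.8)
The plan is to recognize ${\rm ELD}_I(\lambda)$ as the squared distance $|XI|^2$ from the incenter $I$ to the point $X = (1-\lambda)H + \lambda O$ on the Euler line, and then to apply inequality (\ref{Cauchy 2}) with the point at infinity taken to be that of the Euler line itself. Concretely, I would set $P = I$, $Q = X$, and $l = O - H$. Since $O$ and $H$ each have barycentric coordinates summing to $1$, the vector $l = O - H$ has coordinate sum $0$, so it legitimately represents the point at infinity in the direction of line $OH$, and $|l|^2 = (O-H)K(O-H)^T = |OH|^2$. Inequality (\ref{Cauchy 2}) then gives
\[
  {\rm ELD}_I(\lambda) \;=\; |XI|^2 \;\geq\; \frac{4S^2}{|OH|^2}\,[\,I;X;l\,]^2 .
\]

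The key step is to observe that the determinant $[\,I;X;l\,]$ is \emph{independent of} $\lambda$. Expanding by multilinearity in the row corresponding to $X = (1-\lambda)H + \lambda O$ and then in the row corresponding to $l = O - H$, the two determinants with a repeated row vanish and, using $[\,I;O;H\,] = -[\,I;H;O\,]$, everything collapses to $[\,I;X;l\,] = [\,I;H;O\,]$; in particular $[\,I;X;l\,]^2 = [\,H;I;O\,]^2$. Substituting this back into the displayed inequality yields exactly ${\rm ELD}_I(\lambda) \geq \tfrac{4S^2[\,H;I;O\,]^2}{|OH|^2} = d_I$, which is the left-hand inequality in (\ref{ELD inequality}). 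Geometrically this is just the statement that every point of the Euler line lies at distance at least $\sqrt{d_I}$ from $I$, with equality precisely when $X$ is the foot of the perpendicular from $I$, i.e. at $\lambda^\ast = (OKI^T - HKH^T)/|OH|^2$; so the bound is sharp. As an alternative one can bypass the metric identity altogether: ${\rm ELD}_I$ is a quadratic in $\lambda$ with positive leading coefficient $|OH|^2$, so its minimum is the vertex value $|IH|^2 - (OKI^T - HKH^T)^2/|OH|^2$, and a direct expansion with the stated closed forms of the three coefficients shows this equals $d_I$.

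For the right-hand inequality $d_I \geq 0$, note that in the form $d_I = \tfrac{4S^2[\,H;I;O\,]^2}{|OH|^2}$ the numerator $4S^2[\,H;I;O\,]^2$ is nonnegative and the denominator $|OH|^2 = 9R^2 + 8Rr + 2r^2 - 2s^2$ is a squared length, hence positive for any non-degenerate, non-equilateral triangle; thus $d_I \geq 0$. Equivalently one may quote (\ref{Bottema 1}), which asserts precisely that the numerator $s^4 - 2(2R^2 + 10Rr - r^2)s^2 + r(4R+r)^3$ of the closed form of $d_I$ equals $-16S^2[\,H;I;O\,]^2 \leq 0$ while the denominator $4|OH|^2$ is positive. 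In the degenerate equilateral case $O = H = I$, all centers coincide, $|OH|^2 = 0$, and $d_I = 0$ trivially (the closed form is then a $0/0$ limit evaluating to $0$), so (\ref{ELD inequality}) holds in every case.

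I expect the only genuine subtlety to be the bookkeeping that makes $[\,I;X;l\,]$ manifestly constant along the Euler line — once that is secured, both halves of (\ref{ELD inequality}) are immediate consequences of material already in the excerpt — together with a clean treatment of the equilateral degeneracy, where $|OH|^2$ vanishes and the closed-form expression for $d_I$ must be read as a limit.
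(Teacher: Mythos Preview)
Your argument is correct. The paper itself does not supply an explicit proof of this theorem: it simply \emph{defines} $d_I$ as the minimum value of the quadratic ${\rm ELD}_I(\lambda)$ and identifies that minimum with the squared distance from $I$ to the Euler line, after which both halves of (\ref{ELD inequality}) are tautological (the minimum of a squared distance is nonnegative, and every value of a function dominates its minimum). Your route via (\ref{Cauchy 2}) with $l=O-H$ is the natural way to make that implicit argument explicit, and your determinant bookkeeping showing $[I;X;O-H]=[I;H;O]$ for all $\lambda$ is exactly what justifies the paper's claim that the minimum equals $\tfrac{4S^2[H;I;O]^2}{|OH|^2}$. So your approach is not so much different from the paper's as it is a fleshed-out version of what the paper takes for granted; the alternative vertex-of-the-parabola computation you mention is likewise what underlies the paper's stated closed form for $d_I$.
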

Several interesting consequences follow immediately:
\begin{corollary}
In any non-degenerate triangle, the angle $\angle HIO$ is obtuse.
\end{corollary}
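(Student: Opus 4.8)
The plan is to show that $\vv{IH}\cdot\vv{IO}<0$; since $I$, $H$, $O$ are pairwise distinct whenever $\triangle ABC$ is not equilateral (for the equilateral triangle all three coincide and $\angle HIO$ is not defined), this is precisely the assertion that $\angle HIO$ is obtuse. First I would rewrite the inner product in a form involving only $R$, $r$, $s$. The cleanest way is the polarization identity: since $\vv{IH}-\vv{IO}=\vv{OH}$, we have
\[
\vv{IH}\cdot\vv{IO}=\tfrac12\bigl(|IH|^2+|IO|^2-|OH|^2\bigr).
\]
(Equivalently, one can compute $\vv{IH}\cdot\vv{IO}=(H-I)K(O-I)^T$ directly and use the $H$-kernel property $HKO^T=HKI^T$ to cancel the orthocenter-dependent terms; both routes give the same expression.)

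Next I would substitute the formulas already recorded above, namely $|IH|^2=4R^2+4Rr+3r^2-s^2$ and $|OH|^2=9R^2+8Rr+2r^2-2s^2$, together with Euler's relation $|IO|^2=R^2-2Rr$. A one-line cancellation gives
\[
\vv{IH}\cdot\vv{IO}=\tfrac12\bigl(s^2+r^2-4R^2-6Rr\bigr),
\]
so the corollary is equivalent to the single inequality $s^2<4R^2+6Rr-r^2$.

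Finally I would close the estimate with an upper bound on $s^2$. Using the right-hand Blundon bound (\ref{Bottema 2}), $s^2\le s^2_2=2R^2+10Rr-r^2+2(R-2r)\sqrt{R(R-2r)}$, the target inequality reduces to $2(R-2r)\sqrt{R(R-2r)}<2R(R-2r)$. Because the triangle is non-equilateral, Euler's inequality holds strictly, $R-2r>0$, so I may cancel the positive factor $2(R-2r)$ and am left with $\sqrt{R(R-2r)}<R$, i.e.\ $-2Rr<0$, which is immediate. (Gerretsen's inequality $s^2\le 4R^2+4Rr+3r^2$ serves just as well, reducing the claim to $4r^2<2Rr$, again only Euler.)

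I do not anticipate a genuine obstacle here—each step is a short computation—but two points deserve attention. First, one should phrase $\vv{IH}\cdot\vv{IO}$ so that the orthocenter contributions cancel before substituting, which is why the polarization identity (or the $H$-kernel property) is preferable to expanding the metric product by brute force. Second, the final step genuinely requires Euler's inequality in its strict form $R>2r$, which is exactly why the equilateral triangle must be set aside—and there $\angle HIO$ is meaningless in any case.
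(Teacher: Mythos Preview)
Your argument is correct. Where you diverge from the paper is in the last step. After reaching
\[
2\,\vv{IH}\cdot\vv{IO}=s^2+r^2-4R^2-6Rr,
\]
you bound $s^2$ from above by Blundon (or Gerretsen) and then finish with strict Euler. The paper instead records the identity
\[
2\,\vv{IH}\cdot\vv{IO}=-\,{\rm ELD}_I(0)-\frac{2r}{R}\,{\rm ELD}_I(1)=-|IH|^2-\frac{2r}{R}\,|IO|^2,
\]
so negativity is read off instantly as a negative combination of two squared distances. The two routes are in fact the same computation in disguise: your Gerretsen step $s^2\le 4R^2+4Rr+3r^2$ is exactly $|IH|^2\ge0$, and your strict Euler step $2r<R$ is exactly $|IO|^2>0$, so your ``Gerretsen $+$ Euler'' closure is algebraically identical to the paper's decomposition with coefficients $1$ and $2r/R$. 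What the paper's packaging buys is that no named inequality needs to be invoked---the sign is manifest---whereas your version has the virtue of being phrased entirely in classical $R,r,s$ language without reference to the ${\rm ELD}_I$ machinery.
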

\begin{proof}
\begin{align*}
    2\left(I-H\right)K\left(I-O\right)^T=-{\rm ELD}_I(0)-\frac{2r}R{\rm ELD}_I(1)<0.
\end{align*}
\end{proof}
\begin{corollary}
   \begin{align*}
       a^2+b^2+c^2\leq 9R^2.
   \end{align*}
\end{corollary}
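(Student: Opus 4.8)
The plan is to identify the quantity $9R^2-(a^2+b^2+c^2)$ with the coefficient $|OH|^2$ that already appears in the expansion of ${\rm ELD}_I$, and to read off its nonnegativity from the fact that ${\rm ELD}_I$ is a squared distance.

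First I would note that ${\rm ELD}_I(\lambda)=|XI|^2\ge 0$ for every real $\lambda$, since it is a genuine squared length. A real polynomial of degree at most two that is nonnegative on all of $\mathbb{R}$ necessarily has a nonnegative $\lambda^2$-coefficient (otherwise it would tend to $-\infty$); applied to ${\rm ELD}_I(\lambda)=|OH|^2\lambda^2-2\left(OKI^T-HKH^T\right)\lambda+|IH|^2$ this gives $|OH|^2\ge 0$, that is,
\[
9R^2+8Rr+2r^2-2s^2\ge 0.
\]
Of course $|OH|^2\ge 0$ is in any case immediate, being the squared distance between $O$ and $H$; the point is only to keep the argument within the ${\rm ELD}_I$ setup of this section.

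Next I would convert this into a statement about $a^2+b^2+c^2$ using the standard symmetric-function identities $a+b+c=2s$ and $ab+bc+ca=s^2+r^2+4Rr$, which yield
\[
a^2+b^2+c^2=(a+b+c)^2-2(ab+bc+ca)=4s^2-2\left(s^2+r^2+4Rr\right)=2s^2-2r^2-8Rr.
\]
Substituting this into the formula for $|OH|^2$ gives the clean Euler-type identity $|OH|^2=9R^2-(a^2+b^2+c^2)$, so the inequality $|OH|^2\ge 0$ is exactly $a^2+b^2+c^2\le 9R^2$, with equality precisely when $O=H$, i.e.\ when $\triangle ABC$ is equilateral.

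There is essentially no obstacle here: the only nontrivial ingredient is the translation between the $(s,r,R)$ parametrization and $a^2+b^2+c^2$, which is classical. If one prefers to avoid $(s,r,R)$ entirely, an alternative is to set $X=O$ in the inequality $|HX|^2=XKX^T-HKH^T\ge 0$ coming from the $H$-kernel property and compute $OKO^T-HKH^T=9R^2-(a^2+b^2+c^2)$ directly from the barycentric coordinates of $O$ and $H$; but given that the value of $|OH|^2$ is already recorded above, the route just described is the shortest.
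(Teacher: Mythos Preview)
Your proof is correct. The identification $|OH|^2=9R^2-(a^2+b^2+c^2)$ via $a^2+b^2+c^2=2s^2-2r^2-8Rr$ is valid, and the nonnegativity of a squared distance does the rest. The leading-coefficient argument is a pleasant way to tie it back to ${\rm ELD}_I$, though as you yourself note it is really just $|OH|^2\ge 0$ in disguise.

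The paper argues differently: it writes
\[
9R^2-a^2-b^2-c^2=2\,{\rm ELD}_I(0)+\left(1+\tfrac{2r}{R}\right){\rm ELD}_I(1),
\]
i.e.\ it decomposes the quantity as a positive linear combination of two particular \emph{values} of ${\rm ELD}_I$ (namely $|IH|^2$ and $|IO|^2$), and then invokes the ELD inequality pointwise. Your route uses a single squared distance $|OH|^2$ (equivalently, the leading coefficient of ${\rm ELD}_I$), which is shorter and recovers the classical Euler--Leibniz identity with its exact equality case $O=H$. The paper's decomposition, on the other hand, is consistent with its broader program of expressing $(s,r,R)$-inequalities as sums of specific ${\rm ELD}_I(\lambda)$ values; it is less economical here but fits the template used for the surrounding corollaries.
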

\begin{proof}
     \begin{align*}
         9R^2-a^2-b^2-c^2=2{\rm ELD}_I\left(0\right)+\left(1+\frac{2r}R\right){\rm ELD}_I\left(1\right)\geq0.
     \end{align*}
\end{proof}
\begin{corollary}
    \begin{align*}
        \frac1{a^2}+\frac1{b^2}+\frac1{c^2}\leq\frac{1}{4r^2}.
    \end{align*}
\end{corollary}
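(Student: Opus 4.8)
The plan is to reduce the inequality to a polynomial inequality in $s^{2}$ and then display that polynomial as a nonnegative combination of quantities already known to be nonnegative, in exactly the spirit of the two preceding corollaries. First I would rewrite the left-hand side in terms of $s,r,R$. From $a+b+c=2s$, $ab+bc+ca=s^{2}+r^{2}+4Rr$ and $abc=4Rrs$ one obtains
\[
a^{2}b^{2}+b^{2}c^{2}+c^{2}a^{2}=(s^{2}+r^{2}+4Rr)^{2}-16Rrs^{2},\qquad a^{2}b^{2}c^{2}=16R^{2}r^{2}s^{2},
\]
hence
\[
\frac{1}{a^{2}}+\frac{1}{b^{2}}+\frac{1}{c^{2}}=\frac{(s^{2}+r^{2}+4Rr)^{2}-16Rrs^{2}}{16R^{2}r^{2}s^{2}}.
\]
Multiplying the desired inequality by $16R^{2}r^{2}s^{2}>0$ and expanding, it becomes equivalent to
\[
P(s^{2}):=s^{4}-2\left(2R^{2}+4Rr-r^{2}\right)s^{2}+r^{2}(4R+r)^{2}\le 0.
\]

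The heart of the matter is then the algebraic identity
\[
-P(s^{2})=\left[-s^{4}+2\left(2R^{2}+10Rr-r^{2}\right)s^{2}-r(4R+r)^{3}\right]+12Rr\,{\rm ELD}_{I}(0)+16(R+r)r\,{\rm ELD}_{I}(1),
\]
which I would verify by substituting ${\rm ELD}_{I}(0)=|IH|^{2}=4R^{2}+4Rr+3r^{2}-s^{2}$ and ${\rm ELD}_{I}(1)=|OI|^{2}=R^{2}-2Rr$ and matching the coefficients of $s^{4}$, $s^{2}$ and $1$. The bracketed expression is the negative of the left-hand side of (\ref{Bottema 1}), hence equals $16S^{2}[H;I;O]^{2}\ge 0$; the two remaining summands are nonnegative because $R,r>0$ and ${\rm ELD}_{I}(0),{\rm ELD}_{I}(1)\ge 0$ by (\ref{ELD inequality}). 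Dividing by $16R^{2}r^{2}s^{2}$ gives
\[
\frac{1}{4r^{2}}-\left(\frac{1}{a^{2}}+\frac{1}{b^{2}}+\frac{1}{c^{2}}\right)=\frac{16S^{2}[H;I;O]^{2}+12Rr\,{\rm ELD}_{I}(0)+16(R+r)r\,{\rm ELD}_{I}(1)}{16R^{2}r^{2}s^{2}}\ge 0,
\]
which is the claim.

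The step I expect to be the real obstacle is discovering this decomposition. The polynomial $P(s^{2})$ differs from the quartic appearing in (\ref{Bottema 1}) exactly by the term $4Rr\bigl(3s^{2}-(4R+r)^{2}\bigr)$, and the point is to recognize that this correction equals $-12Rr\,{\rm ELD}_{I}(0)-16(R+r)r\,{\rm ELD}_{I}(1)$ and is therefore nonpositive; equivalently, that $3s^{2}\le(4R+r)^{2}$, which follows from $s^{2}\le 4R^{2}+4Rr+3r^{2}$ (that is, ${\rm ELD}_{I}(0)\ge 0$) together with Euler's inequality $R\ge 2r$ (itself ${\rm ELD}_{I}(1)=|OI|^{2}=R^{2}-2Rr\ge 0$). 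Casting the correction through the ${\rm ELD}_{I}$-values keeps the whole argument inside the present framework and matches the form of Corollaries~1 and~2. Once the decomposition is in hand, everything else is routine symmetric-function bookkeeping and coefficient comparison, and equality holds throughout only when $R=2r$, i.e., for the equilateral triangle.
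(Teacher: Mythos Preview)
Your argument is correct, and underneath the presentation it is the same decomposition the paper uses. The paper records the identity in one line as
\[
(2Rrs)^2\left(\frac{1}{4r^{2}}-\frac{1}{a^{2}}-\frac{1}{b^{2}}-\frac{1}{c^{2}}\right)=|OH|^{2}\,d_I+Rr\,{\rm ELD}_I(2)\ge 0.
\]
Your bracketed quartic is exactly $4|OH|^{2}d_I$ by the formula for $d_I$ (equivalently, by~(\ref{Bottema 1})), and your remaining piece satisfies
\[
12Rr\,{\rm ELD}_I(0)+16(R+r)r\,{\rm ELD}_I(1)=4Rr\bigl((4R+r)^{2}-3s^{2}\bigr)=4Rr\,{\rm ELD}_I(2),
\]
so after dividing your identity by $4$ you recover the paper's formula verbatim. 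The only genuine difference is cosmetic: you route the nonnegativity of the second summand through Gerretsen and Euler (${\rm ELD}_I(0)$ and ${\rm ELD}_I(1)$), whereas the paper invokes ${\rm ELD}_I(2)\ge 0$ directly; and you write the first summand as the explicit quartic rather than as $|OH|^{2}d_I$. Both packagings yield the same inequality with the same equality case.
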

\begin{proof}
    \begin{align*}
        \left(2Rrs\right)^2\left(\frac{1}{4r^2}-\frac1{a^2}-\frac1{b^2}-\frac1{c^2}\right)=|OH|^2\cdot d_I+Rr\cdot{\rm ELD}_I\left(2\right)\geq0.
    \end{align*}
\end{proof}
Several classical inequalities can also be represented in terms of ${\rm ELD}_I\left(\lambda\right)$: 
\begin{align}
    R-2r&=\frac 1R{\rm ELD}_I\left(1\right),
    \label{Euler}\\
    4R^2+4Rr+3r^2-s^2&={\rm ELD}_I\left(0\right),
    \label{Gerretsen1}\\
    s^2-r\left(16R-5r\right)&=9{\rm ELD}_I\left(\frac 23\right),
    \label{Gerretsen2}\\
    4R+r-\sqrt{3}s&=\frac{{\rm ELD}_I\left(2\right)}{4R+r+\sqrt{3}s}.\label{FinslerHadwiger}
\end{align}
The equation (\ref{Euler}) is Euler's inequality, the equations (\ref{Gerretsen1}) and (\ref{Gerretsen2}) correspond to Gerretsen's inequality {\cite{Gerretsen1953}}, and the equation (\ref{FinslerHadwiger}) is equivalent to
the Finsler-Hadwiger's inequality {\cite{FinslerHadwiger1937}}, as shown by the identity:
\begin{align*}
    a^2+b^2+c^2-(a-b)^2-(b-c)^2-(c-a)^2-4\sqrt3S=4r\left(4R+r-\sqrt3s\right).
\end{align*}
By the ELD inequality, the above classical inequalities can be sharpened as follows:
\begin{theorem}[Sharpened Classical Inequalities]
    \begin{align}
    R -2r&\geq \frac{d_I}{R},\\
    4R^2+4Rr+3r^2-s^2 &\geq d_I,\\
    s^2-r\left(16R-5r\right)&\geq 9d_I,\\
    4R+r-\sqrt{3}s&\geq\frac{d_I}{4R+r+\sqrt{3}s}.
\end{align}
\end{theorem}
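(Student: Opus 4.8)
The plan is to obtain all four inequalities as one-line consequences of the ELD inequality (\ref{ELD inequality}) together with the identities (\ref{Euler})--(\ref{FinslerHadwiger}), which already rewrite each classical quantity as a positive multiple of a single value ${\rm ELD}_I(\lambda)$. Since ${\rm ELD}_I$ is a quadratic in $\lambda$ with positive leading coefficient $|OH|^2$ whose minimum value is $d_I$, we have ${\rm ELD}_I(\lambda)\geq d_I$ for \emph{every} real $\lambda$; I would simply specialize $\lambda$ to the value prescribed by each identity and clear the (positive) rescaling factor.

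Concretely: (\ref{Euler}) gives $R-2r=\tfrac1R{\rm ELD}_I(1)\geq \tfrac1R d_I$ since $1/R>0$; (\ref{Gerretsen1}) gives $4R^2+4Rr+3r^2-s^2={\rm ELD}_I(0)\geq d_I$ directly; (\ref{Gerretsen2}) gives $s^2-r(16R-5r)=9\,{\rm ELD}_I(\tfrac23)\geq 9d_I$ since $9>0$; and (\ref{FinslerHadwiger}) gives $4R+r-\sqrt3 s=\tfrac{{\rm ELD}_I(2)}{4R+r+\sqrt3 s}\geq \tfrac{d_I}{4R+r+\sqrt3 s}$, where the denominator is strictly positive because $R,r,s>0$ in a nondegenerate triangle. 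In every case the rescaling constant is manifestly positive, so multiplying ${\rm ELD}_I(\lambda)\geq d_I$ through by it preserves the inequality direction; the classical bounds are recovered by using only $d_I\geq 0$, so the theorem is a genuine strengthening.

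I expect no real obstacle: the substantive work is already packaged in the identities (\ref{Euler})--(\ref{FinslerHadwiger}) and in (\ref{ELD inequality}) itself, and what remains is bookkeeping. The only two points I would state explicitly are that ${\rm ELD}_I(\lambda)\geq d_I$ holds for all real $\lambda$ (not just $\lambda\in[0,1]$), which is immediate from $|OH|^2>0$ for a nondegenerate triangle, and the positivity of $4R+r+\sqrt3 s$ needed to clear the last denominator. Equality in each sharpened inequality is then inherited from the ELD inequality, occurring exactly when the prescribed value of $\lambda$ is the minimizer $(OKI^T-HKH^T)/|OH|^2$ (equivalently, when the corresponding point on the Euler line is the foot of the perpendicular from $I$), together with the degenerate equilateral case in which all quantities vanish.
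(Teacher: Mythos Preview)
Your proposal is correct and is exactly the argument the paper intends: the theorem is stated immediately after the identities (\ref{Euler})--(\ref{FinslerHadwiger}) with the one-line justification ``By the ELD inequality, the above classical inequalities can be sharpened as follows,'' and your write-up just makes that sentence explicit. The only minor slip is the claim that $|OH|^2>0$ in every nondegenerate triangle---it vanishes in the equilateral case---but you already handle that case separately, and in any event the inequality ${\rm ELD}_I(\lambda)\geq d_I$ is Theorem~2 itself, so no extra justification is needed.
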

Moreover, the axis of symmetry of ${\rm ELD}_I(\lambda)$ lies within a specific interval:
\begin{theorem}
    The axis of symmetry of ${\rm ELD}_I(\lambda)$ lies in the interval $\left[0,\frac 23\right]$.
\end{theorem}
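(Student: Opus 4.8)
The plan is to pin down the axis of symmetry $\lambda^{\ast}$ of the upward parabola ${\rm ELD}_I$ from its values at the three distinguished parameters $\lambda=0$, $\lambda=\tfrac{2}{3}$ and $\lambda=1$, which correspond, via $X=(1-\lambda)H+\lambda O$, to the orthocenter $H$, to the centroid $G=\tfrac{1}{3}H+\tfrac{2}{3}O$, and to the circumcenter $O$. Since the reference triangle is nondegenerate and not equilateral, $|OH|^{2}>0$, so we may write ${\rm ELD}_I(\lambda)=|OH|^{2}(\lambda-\lambda^{\ast})^{2}+d_I$; evaluating this at $\lambda=0$ and $\lambda=1$ and subtracting cancels $d_I$ and gives
\[ \lambda^{\ast}=\frac{1}{2}+\frac{{\rm ELD}_I(0)-{\rm ELD}_I(1)}{2\,|OH|^{2}}. \]
Hence $\lambda^{\ast}\ge 0$ is equivalent to $|OH|^{2}+{\rm ELD}_I(0)-{\rm ELD}_I(1)\ge 0$, while $\lambda^{\ast}\le\tfrac{2}{3}$ is equivalent to $|OH|^{2}-3\bigl({\rm ELD}_I(0)-{\rm ELD}_I(1)\bigr)\ge 0$.

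Both of these I would then settle by substituting the closed forms already recorded in the paper: ${\rm ELD}_I(0)=4R^{2}+4Rr+3r^{2}-s^{2}$ from (\ref{Gerretsen1}), ${\rm ELD}_I(1)=R(R-2r)$ from (\ref{Euler}), ${\rm ELD}_I(\tfrac{2}{3})=\tfrac{1}{9}\bigl(s^{2}-r(16R-5r)\bigr)$ from (\ref{Gerretsen2}), and $|OH|^{2}=9R^{2}+8Rr+2r^{2}-2s^{2}$. A direct expansion then yields the two identities
\[ |OH|^{2}+{\rm ELD}_I(0)-{\rm ELD}_I(1)=12R^{2}+14Rr+5r^{2}-3s^{2}=3\,{\rm ELD}_I(0)+\tfrac{2r}{R}\,{\rm ELD}_I(1), \]
\[ |OH|^{2}-3\bigl({\rm ELD}_I(0)-{\rm ELD}_I(1)\bigr)=s^{2}-10Rr-7r^{2}=9\,{\rm ELD}_I\!\left(\tfrac{2}{3}\right)+\tfrac{6r}{R}\,{\rm ELD}_I(1). \]
Since ${\rm ELD}_I(0),\,{\rm ELD}_I(\tfrac{2}{3}),\,{\rm ELD}_I(1)\ge d_I\ge 0$ by the ELD inequality (\ref{ELD inequality}) and the weights $3,\tfrac{2r}{R},9,\tfrac{6r}{R}$ are all positive, both right-hand sides are nonnegative, which is precisely what the two equivalences of the previous paragraph demand. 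This establishes $0\le\lambda^{\ast}\le\tfrac{2}{3}$.

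The argument is essentially bookkeeping, so the one thing that needs care is spotting the correct nonnegative combinations: in each identity a single application of Euler's inequality $R\ge 2r$---here appearing as ${\rm ELD}_I(1)\ge 0$---absorbs the otherwise sign-indefinite remainder, and the upper bound genuinely requires the third sample value ${\rm ELD}_I(\tfrac{2}{3})$, since no positive combination of ${\rm ELD}_I(0)$ and ${\rm ELD}_I(1)$ alone can produce the $+s^{2}$ term. I do not expect a real obstacle. The two points to flag are the standing assumption that the triangle is not equilateral---without which $|OH|^{2}=0$ and $\lambda^{\ast}$ is not defined---and the remark that equality in either bound forces ${\rm ELD}_I(1)=0$, that is, the equilateral case, so that for any genuine triangle $\lambda^{\ast}$ lies strictly in $\bigl(0,\tfrac{2}{3}\bigr)$; equivalently, the foot of the perpendicular from $I$ to the Euler line lies strictly between $H$ and $G$.
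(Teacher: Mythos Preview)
Your proof is correct and follows essentially the same route as the paper. You express the vertex of the parabola via the sample values ${\rm ELD}_I(0)$ and ${\rm ELD}_I(1)$, whereas the paper writes it as $\lambda^{\ast}=\dfrac{OKI^T-HKH^T}{OKO^T-HKH^T}$ directly from the quadratic's coefficients; after that, both arguments rest on the very same pair of identities (your two displays are exactly twice the paper's $2(OKI^T-HKH^T)=3\,{\rm ELD}_I(0)+\tfrac{2r}{R}{\rm ELD}_I(1)$ and $2(OKO^T-HKH^T)-3(OKI^T-HKH^T)=\tfrac{9}{2}{\rm ELD}_I(\tfrac{2}{3})+\tfrac{3r}{R}{\rm ELD}_I(1)$), together with the ELD inequality.
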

\begin{proof}
Using the identities:
    \begin{align*}
    2\left(OKI^T-HKH^T\right)&=3{\rm ELD}_I\left(0\right)+
    \frac{2r}{R}{\rm ELD}_I\left(1\right)\geq 0,\\
    2\left(OKO^T-HKH^T\right)-3\left(OKI^T-HKH^T\right)&=
    \frac 92 {\rm ELD}_I\left(\frac 23\right)+\frac{3r}{R}{\rm ELD}_I\left(1\right)\geq 0,
\end{align*}
we conclude the axis lies at
$\lambda = \frac{OKI^T-HKH^T}{OKO^T-HKH^T}\in\left[0,\frac 23\right]$. 
\end{proof}
As an immediate consequence:
\begin{align*}
    {\rm ELD}_I\left(2\right)\geq{\rm ELD}_I\left(1\right)\geq{\rm ELD}_I\left(\frac 23\right).
\end{align*}
We now apply the method to sharpen the Kooi's inequality \cite{Kooi1958}:
\begin{align*}
    {\rm Kooi}^{\star} = \frac{R\left(4R+r\right)^2}{2\left(2R-r\right)}-s^2\geq0.
\end{align*}
This can be reformulated as a metric expression \cite{Lukarevski2019}:
\begin{align*}
    {\rm Kooi}^{\star}=\frac{\left(4R+r\right)^2}{2R\left(2R-r\right)}|OM|^2,
\end{align*}
where $M$ is the Mittenpunkt ($X(9)$ in ETC).
The squared distance from $M$ to a point $X$ ($X = H+\lambda\left(O-H\right)$) on the Euler line is:
\begin{align}
    {\rm ELD}_M(\lambda)=|MX|^2=|OH|^2\lambda^2+
    2\left(O-H\right)K\left(H-M\right)^T\lambda+|HM|^2,
\end{align}
with the coefficients:
\begin{align*}
    |OH|^2&=9R^2+8Rr+2r^2-2s^2,\\
    |HM|^2&=\frac{\left(2R+r\right)^2}{\left(4R+r\right)^2}\left(16R^2+8Rr+r^2-3s^2\right),\\
    2\left(O-H\right)K\left(H-M\right)^T&=\frac{2R+r}{4R+r}\left(5s^2-24R^2-18Rr-3r^2\right).
\end{align*}
The minimum of ${\rm ELD}_M(\lambda)$ is
\begin{align*}
    \frac{4S^2|O;H;M|^2}{|OH|^2}=\frac{\left(2R+r\right)^2}{\left(4R+r\right)^2}d_0.
\end{align*}
Hence, Kooi's inequality can be sharpened as:
\begin{align}
    {\rm Kooi}^{\star}\geq\frac{\left(4R+r\right)^2}{2R\left(2R-r\right)}\cdot\frac{4S^2|O;H;M|^2}{|OH|^2}=\frac{\left(2R+r\right)^2d_0}{2R\left(2R-r\right)}.
    \label{Kooi sharpen}
\end{align}
An alternative form, due to \cite{Bilchev1991}, is:
\begin{align*}
    {\rm Kooi}^{\star}=\frac{\left(4R+r\right)^2}{8R\left(2R-r\right)}|HX_7|^2,
\end{align*}
where $X_7$ denotes the Gergonne point ($X(7)$ in ETC).
The same refinement method applies, yielding the same result as in (\ref{Kooi sharpen}). 
\section{Conclusion}
In this paper, we have established proofs and refinements of several classical geometric inequalities by interpreting them as distance inequalities between points in the barycentric coordinate system. By introducing the squared-distance function (${\rm ELD}_I(\lambda)$), which measures the distance from the incenter $I$ to a point on the Euler line, we derived geometric interpretations for a broad class of inequalities involving the classical triangle elements $s,\,R$ and $r$, and generalized various related results. 

Compared to traditional algebraic or synthetic approaches, the metric-based method not only provides deeper geometric insight into these inequalities but also offers a natural and flexible framework for extending classical results from the Euclidean plane to higher-dimensional Euclidean spaces and even to non-Euclidean geometries. Further developments along these lines will be explored in subsequent work.

\end{document}